\newif\ifmarkauthors
  \definecolor{darkred}{RGB}{139,0,0}
  \definecolor{darkgreen}{RGB}{0,100,0}
  \definecolor{darkmagenta}{RGB}{139,0,139}
  \definecolor{darkorange}{RGB}{190,70,20}
  \newcommand{\gw}[1]{{\color{red}{#1}}}
  \def\cbdelete[#1]{}
  \newcommand{\gw}[1]{{#1}}
  \def\cbdelete[#1]{}
\newtheorem{theorem}{Theorem}
\newtheorem{remark}[theorem]{Remark}
\newtheorem{proposition}[theorem]{Proposition}
\newtheorem{example}[theorem]{Example}
\def\rho{\varrho}      
\def\bbN{\mathbb{N}}
\def\e{\varepsilon}
\newcommand{\abs}[1]{\left\vert#1\right\vert}
\newcommand{\norm}[1]{\left\Vert#1\right\Vert}
\newcommand{\esup}{\operatornamewithlimits{ess\,sup}}
\newcommand{\setu}{{\mathfrak{u}}}
\newenvironment{proof}{\begin{trivlist}\item[\hskip\labelsep{\bf Proof.}]}{$\hfill\Box$\end{trivlist}}
\newcommand{\rd}{{\mathrm{\,d}}}
\newcommand{\calF}{{\mathcal{F}}}
\newcommand{\bsz}{{\boldsymbol{z}}}
\newcommand{\bsx}{{\boldsymbol{x}}}
\newcommand{\bst}{{\boldsymbol{t}}}
\newcommand{\bszero}{{\boldsymbol{0}}}
\newcommand{\bbR}{{\mathbb{R}}}
\newcommand{\mask}[1]{}
\title{On efficient weighted integration via a change of variables}
\author{P. Kritzer\thanks{P. Kritzer is supported by the Austrian
Science Fund (FWF):
Project F5506-N26, which is a part of the Special Research Program
"Quasi-Monte Carlo Methods:
Theory and Applications".}, F. Pillichshammer\thanks{F. Pillichshammer is
 supported by the Austrian Science Fund (FWF): Project F5509-N26,
which is a part of the Special Research Program "Quasi-Monte Carlo Methods:
Theory and Applications".}, L. Plaskota\thanks{L. Plaskota is supported by 
the National Science Centre, Poland, under grant 2017/25/B/ST1/00945.},
and G. W. Wasilkowski}
\date{}
\begin{document}
\maketitle

\begin{abstract}
\noindent In this paper, we study the approximation of $d$-dimensional $\rho$-weighted
integrals over unbounded domains $\bbR_+^d$ or $\bbR^d$ 
using a special change of variables, so that quasi-Monte Carlo (QMC) or sparse grid rules can be applied 
to the transformed integrands over the unit cube. We consider a class of integrands with
bounded $L_p$ norm of mixed partial derivatives of first order, where $p\in[1,+\infty].$

The main results give sufficient conditions on the change of
variables $\nu$ which guarantee that the transformed integrand belongs to the standard Sobolev space 
of functions over the unit cube  with mixed smoothness of order one. These conditions depend on $\rho$ and $p$.

The proposed change of variables is in general different than the standard change based on the inverse of the cumulative
distribution function. We stress that the standard change of variables
leads to integrands over a cube; however, those integrands have 
singularities which make the application of QMC and sparse grids ineffective.
Our conclusions are supported 
by numerical experiments.
\end{abstract}

\section{Introduction}
We consider in this paper the approximation of $d$-variate
$\rho_d$-weighted integrals of the form 
\begin{equation}\label{MultiInt}
I_{d,\rho}(f)\,=\,\int_{D^d}f(\bsx)\,\rho_d(\bsx)\rd\bsx,\quad
\mbox{where}\quad \rho_d(\bsx)\,=\,\prod_{j=1}^d\rho(x_j),
\end{equation}
over an unbounded domain $D^d$, where
\[D=\bbR_+\quad\mbox{or}\quad D=\bbR
\]
and for a given probability density function
\[
\rho:D\to\bbR_+.
\]
Such integrals in the univariate case are often approximated by
Gaussian quadratures that enjoy exponential rate of convergence,
see, e.g., \cite{DavRab1984}. There are also generalized Gaussian rules,
see, e.g., \cite{GrieOett14} and the papers cited there, that achieve
exponential rate for integrands with singularities
at infinity. We stress that those results are about the asymptotic behavior of the  
integration error and require analytic integrands. In the current  
paper, we consider integrands of regularity one only, and we analyze the  
worst case error with respect to a class of integrands.

Indeed, we follow the
{\em Information-Based Complexity} approach (see, e.g., \cite{TWW88})
providing worst case results for all integrands $f$ from the Sobolev space
$F_{d,p}(D^d)$ of functions defined on $D^d$ with
mixed partial derivatives of first order bounded in the
standard $L_{p}(D^d)$ space, where $p\in[1,+\infty].$
As will be clear later, our results may also be applied to 
$\gamma$-weighted spaces of $\infty$-variate functions (i.e., $d=+\infty$)
as well as
for $\rho_d$ being a product of different probability densities
(i.e., $\rho(\bsx)=\prod_{j=1}^d\rho_j(x_j)$).

The spaces $F_{d,p}(D^d)$ have been considered in a number of papers, and 
they are naturally related to the Sobolev spaces $W_{d,p}(B^d)$
of functions on a unit
cube $B^d$ with mixed partial first order derivatives in
$L_{p}(B^d)$. Indeed, a quite common 
approach is to use the change of variables
$x:=\Phi_\rho^{-1}(t)$, where $\Phi_\rho$ is the
{\em cumulative distribution function (CDF)} for the probability density
$\rho$, to reduce the $\rho$-weighted integrals over
$D^d$ to the standard Lebesgue integration over a unit cube $B^d$,
and next apply algorithms that are efficient for 
spaces $W_{d,p}(B^d)$. However, as we will show, such an
approach is well founded for only $p=1,$ since for
$p>1$ the change of variables based on 
the inverse of the CDF produces integrands with boundary singularities, and so they {\bf do not} belong to the spaces $W_{d,p}(B^d)$.

To simplify the notation, we will denote $W_{d,p}(B^d)$ and
$F_{d,p}(D^d)$, respectively, by
\[
  W_{d,p}\quad\mbox{and}\quad F_{d,p}.
\]
We investigate changes of variables that transform
the weighted integrals $I_{d,\rho}(f)$ over unbounded $D^d$ into
standard Lebesgue integrals over a bounded cube $B^d$:
\[
I_d(g_{f,\nu})\,=\,\int_{B^d} g_{f,\nu}(\bst)\rd\bst\quad\mbox{with}
\quad g_{f,\nu}(\bst)\,=\,f(\nu(t_1),\dots,\nu(t_d))\,
\prod_{j=1}^d\left(\rho(\nu(t_j))\,\nu'(t_j)\right).
\]
We are searching for {\em change of variables functions}
$\nu:B \rightarrow D$ such that the obtained integrands satisfy
\begin{equation}\label{dupa-1}
  g_{f,\nu}\,\in\,W_{d,p}\quad\mbox{for all\ }f\,\in\,F_{d,p}.
\end{equation}

We now explain the significance of the requirement \eqref{dupa-1}.
If the integrands $g_{f,\nu}$ satisfy \eqref{dupa-1}, then their
integrals can be approximated by cubatures that are known to
have small worst case errors with respect to the spaces $W_{d,p}$,
including {\em quasi-Monte Carlo (QMC)} and {\em sparse grid} methods.
This in turn provides efficient cubatures for the original
$\rho_d$-weighted integration over unbounded~$D^d$. 
More precisely, let
\[ {\rm error}(Q_{d,n};W_{d,p})\,=\,\sup_{g\in W_{d,p}}
\frac{\left|\int_{B^d}g(\bst)\rd\bst-Q_{d,n}(g)\right|}
{\|g\|_{W_{d,p}}}
\]
be the worst case error of a cubature $Q_{d,n}$.
Then the cubature $Q_{d,\rho,n}$ for the weighted integrals $I_{d,\rho}$
over unbounded domains $D^d$, given by
\begin{equation}\label{dupa-2}
  Q_{d,\rho,n}(f)\,:=\,Q_{d,n}(g_{f,\nu}),
\end{equation}
has the worst case error
\[{\rm error}(Q_{d,\rho,n};F_{d,p})\,=\,
\sup_{f\in F_{d,p}}\frac{\left|I_{d,\rho}(f)-Q_{d,\rho,n}(f)\right|}
{\|f\|_{F_{d,p}}},
\]
bounded by
\begin{equation}\label{dupa-3}
{\rm error}(Q_{d,\rho,n};F_{d,p})\,\le\,
{\rm error}(Q_{d,n};W_{d,p})\,\cdot\,
C_{d,p}(\nu),
\end{equation}
where
\[C_{d,p}(\nu)\,:=\,\sup_{f\in F_{d,p}}
\frac{\|g_{f,\nu}\|_{W_{d,p}}}{\|f\|_{F_{d,p}}}.
\]
This is why, in our search, we are looking for functions $\nu$ with finite and possibly small $C_{d,p}(\nu)$.

As already mentioned, the most common
change of variables that uses the inverse of the CDF, 
\begin{equation}\label{dupa-4}
  \nu(t)\,:=\,\Phi^{-1}_\rho(t)\quad\mbox{for}\quad
  \Phi_\rho(x)\,=\,\int_{y\le x}\rho(y)\rd y,
\end{equation}
gives $C_{d,p}(\nu)=+\infty,$ for all $p>1$ and 
{\em non-trivial} densities $\rho$, see Proposition \ref{prop:std}.
Moreover, it produces functions
$g_{f,\nu}$ that have singularities (poles) on the boundary of the cube
$B^d,$ even for exceptionally smooth $f$. We illustrate this by the following
example. Let $D=\bbR_+$, $\rho(x)=\exp(-x)$, $p=+\infty$, and $f(x)=x$.
Then $\|f\|_{F_{1,\infty}}=1,$ yet for $\nu$ as in \eqref{dupa-4}
the change of variables produces 
\[
g_{f,\nu}(t)\,=\,-\,\ln(1-t),
\]
which has a singularity at $t=1.$ Using instead
\[
\nu_{a}(x)\,=\,a\,\Phi^{-1}_\rho(x)\quad\mbox{with}\quad a>1
\]
results in
\[g_{f,\nu_{a}}(t)\,=\,-a^2\,(\ln(1-t))\,(1-t)^{a-1},
\]
which even belongs to $W_{1,\infty}$ for $a>2$ (compare with Fig.\ref{fexpo}). 
\begin{figure}
\begin{center}
\includegraphics[width=12cm]{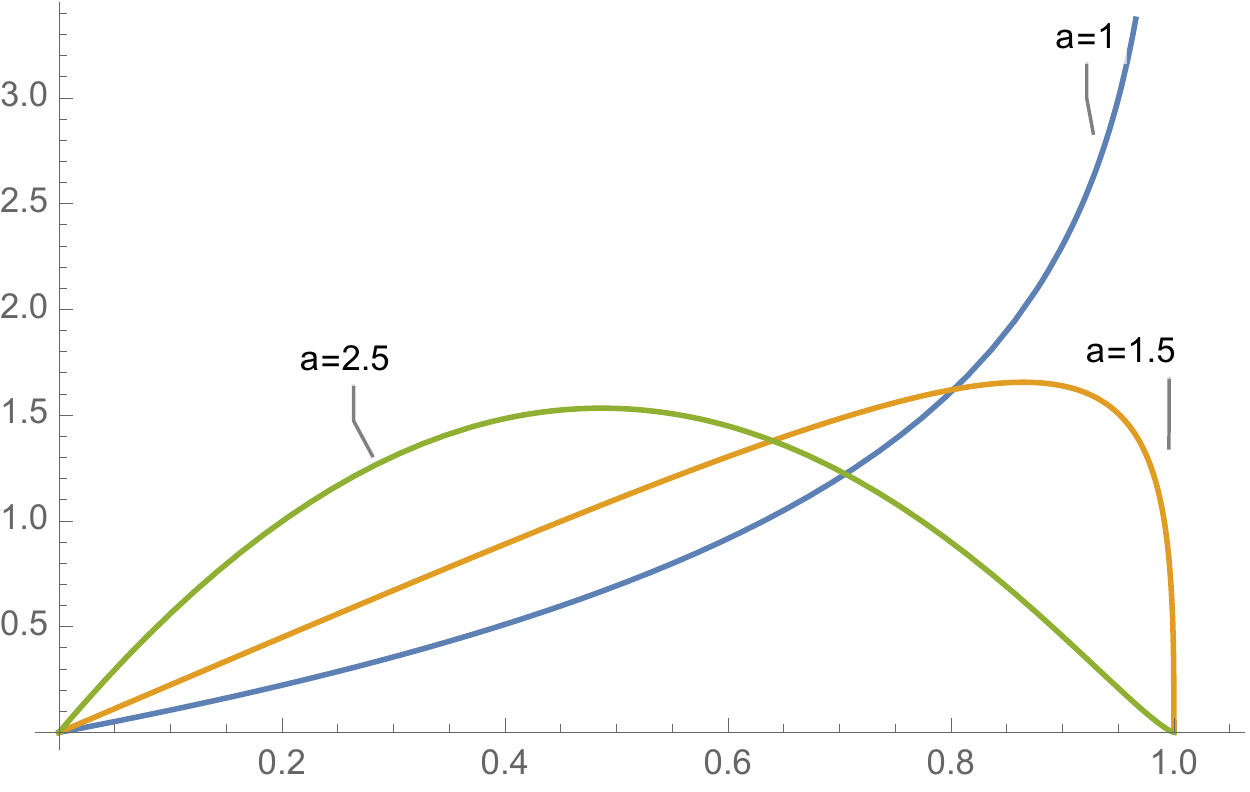}
\caption{Plot of $g_{f,\nu_a}(t)=-a^2(\ln(1-t))(1-t)^{a-1}$ for 
$a\in \{1, 1.5, 2.5\}$.}
\label{fexpo}
\end{center}
\end{figure}

Moreover, the midpoint rule with $10^5$ samples applied to $g_{f,\nu_a}$ with
$a\in\{1, 1.5, 2.5\}$ produces, correspondingly, approximations with absolute errors 
$$ 3.47\times10^{-6},\qquad 5.02\times10^{-8},\qquad 2.65\times10^{-11}, $$
see Numerical Test \ref{Test1} for more details.

In Theorems \ref{thm:main} and \ref{prop:main}, we provide
conditions on the change of variables function $\nu$, that
guarantee finite $C_{d,p}(\nu)$. It turns out that for a number
of specific probability density functions, including
$\rho(x)=\lambda^{-1}\,\exp(-x/\lambda)$ with $D=\bbR_+$
and 
$\rho(x)=(2\pi\sigma^2)^{-1/2}\,\exp(-x^2/(2\,\sigma^2))$
with $D=\bbR$, these conditions are satisfied by
$\nu_a=a\,\Phi_\rho^{-1}$ for a specially chosen $a\ge 1$. However,
with the exception of $p=1$, we still have $C_{1,p}(\nu_1)=+\infty$
for $a=1$. Moreover, as shown in Example~\ref{ex:6},
$\nu_a=a\,\Phi^{-1}_\rho$ need not yield finite $C_{d,p}(\nu_a)$
for any $a$ (unless $p=1$) for a special probability density
function $\rho$. In this case, we provide another change of variable
function $\nu$ with finite $C_{d,p}(\nu)$. 

Finally, we add that the results of this paper can be used to produce
an efficient implementation of the {\em Multivariate Decomposition Method (MDM)}
for approximation of weighted integrals of functions that belong to
$\gamma$-weighted spaces $\calF_{d,p,\gamma}$ for any $d$
including $d=+\infty$. Due to a number of specific details, we
delay the discussion of MDM's to Section \ref{Sect:MDM}.

\section{Multivariate Integration}
In the multivariate integration problem \eqref{MultiInt},
we assume that the integrands $f$ belong to the Sobolev space
$F_{d,p}$ of 
functions anchored at zero and whose mixed first order partial 
derivatives (in the weak sense) are in the $L_p$ space.
By anchored we mean that
\[f(\bsx)\,=\,0\quad\mbox{if}\quad x_j\,=\,0 \quad\mbox{for some\ }
j\in\{1,\dots,d\}.
\]
This assumption is not restrictive, as will be explained in
Section \ref{Sect:MDM}.

Such spaces were introduced for $d=1$ in \cite{WaWo2000} and have been 
later considered in a number of papers.
Specifically, see, e.g. \cite{GnHeHiRiWa16}, a function $f$ is in
$F_{d,p}$ iff it is of the form
\begin{equation}\label{prod-rep}
f(\bsx)\,=\,\int_{D^{d}}h_f(\bsz)\,\prod_{j=1}^d \kappa(x_j,z_j)
\rd\bsz,
\end{equation}
where 
\[\kappa(x,z)\,=\,\left\{\begin{array}{rl} 1 &\mbox{if\ }
x>z\ge 0,\\ 
-1 &\mbox{if\ }x<z <0,\\
0  &\mbox{otherwise}.\end{array}\right.
\]
The mixed first order partial derivative 
\[
\prod_{j=1}^d\frac{\partial}{\partial x_j}f\,
:=\frac{\partial^d}{\partial x_1 \partial x_2 \cdots \partial x_d}f
\]
is then equal to $h_f$ and is assumed to be in the $L_p=L_p(D^d)$ space,
$p\in [1,+\infty]$, which is endowed with the norm
\[\|h_f\|_{L_p(D^d)}\,:=\, 
  \left(\int_{D^d}|h_f(\bsx)|^p\rd\bsx\right)^{1/p}
    \quad\mbox{for}\;\;1\le p<+\infty,
\]
and 
\[\|h_f\|_{L_\infty(D^d)}\,:=\,
\esup_{\bsx\in D^d}|h_f(\bsx)|\quad \mbox{for\ }p=+\infty. 
\]
Then the norm in $F_{d,p}$ is given by 
\[
     \|f\|_{F_{d,p}}\,=\,\|h_{f}\|_{L_p(D^d)}\,=\,\left\|
\prod_{j=1}^d\frac{\partial}{\partial x_j}\,f\right\|_{L_p(D^d)},
\]
which is well defined since the functions $f\in F_{d,p}$
are anchored at zero.

Throughout the paper, $p^*$ denotes the conjugate of $p$, i.e.,
\[\frac1p+\frac1{p^*}\,=\,1.\]
Due to \eqref{prod-rep} and H\"older's inequality, for
$1<p\le+\infty$ we have
\begin{eqnarray*}
\left|\int_{D^d}f(\bsx)\,\rho_d(\bsx)\rd\bsx\right|&=&
\left|\int_{D^d}h_f(\bsz)\int_{D^d}\prod_{j=1}^d
\left(\kappa(x_j,z_j)\,\rho(x_j)\right)\rd\bsx\rd\bsz\right|\\
&\le&\|f\|_{F_{d,p}}\,  \left(\int_{D^d}\,
\left|\int_{D^d}\prod_{j=1}^d(\kappa(x_j,z_j)\,\rho(x_j))
\rd\bsx\right|^{p^*}\rd\bsz\right)^{1/p^*}.
\end{eqnarray*}
Since H\"older's inequality is sharp, we conclude that if the
integration functional $I_{d,\rho}$ is bounded, then its operator
norm is given by
\[
\|I_{d,\rho}\| \,=\,  \left(\int_{D^d}
\left|\int_{D^d}\prod_{j=1}^d(\kappa(x_j,z_j)\,\rho(x_j))
\rd\bsx\right|^{p^*}\rd\bsz\right)^{1/p^*}\,=\,\|I_{1,\rho}\|^{d}
\]
with
\[\|I_{1,\rho}\|\,=\,\left(\int_D\left| 
\int_D \kappa(x,z)\,\rho(x)\rd x\right|^{p^*}\rd z\right)^{1/p^*}.
\]
Similarly, for $p=1$ the equality $\|I_{d,\rho}\|=\|I_{1,\rho}\|^d$
holds with
\[\|I_{1,\rho}\|\,=\,\esup_{z\in D}\left| 
\int_D \kappa(x,z)\,\rho(x)\rd x\right|.
\] 

We propose using a change of variables that transforms the original weighted integral 
over the unbounded domain $D^d$ to the standard Lebesgue integral
over the unit cube $B^d$  with the new integrand belonging to
the standard Sobolev space $W_{d,p},$ as required in \eqref{dupa-1},
and then use,  e.g., QMC or
sparse grid methods, which are well suited and quite
efficient for such kinds of integrands.
Also here we assume, without any loss of generality,
that functions $g\in W_{d,p}$ are anchored at zero.

Specifically, we apply (componentwise) a change of variables
\[
x_j\,=\,\nu(t_j) \quad\mbox{for}\quad1\le j\le d,
\]
where the function $\nu:B\to D$ is monotonically increasing and onto $D,$ and
\[ B\,=\,\left\{\begin{array}{ll} [0,1) & \mbox{if\ }D=\bbR_+,\\
  (-1/2,1/2) & \mbox{if\ }D=\bbR. 
\end{array}\right.
\]
Then, assuming that the derivative $\nu'$ exists and is measurable, we obtain
\begin{equation}\label{trnsf}
  I_{d,\rho}(f)\,=\,\int_{B^d} g(\bst)\rd\bst
\qquad\mbox{with}\qquad
g(\bst)\,\,:=\,f(\nu(t_1),\dots,\nu(t_d))\,
  \prod_{j=1}^d \rho(\nu(t_j))\,\nu'(t_j). 
\end{equation}

To stress the dependence of $g$ on the functions
$f$ and $\nu$, we will often write
\[\mbox{$g_{f,\nu}$ or $g_f$ instead of $g$.}
\]

We are interested in functions $\nu$ such that the corresponding
integrands $g=g_{f,\nu}$ are in 
$W_{d,p}$ and are
anchored at zero.
That is, 
\[
\|g\|_{W_{d,p}}\,=\,\left[\int_{B^d}\left|\prod_{j=1}^d
\frac{\partial}{\partial x_j} g(\bsx)\right|^p\rd \bsx\right]^{1/p}
\,<\,+\infty\quad\mbox{and $g(\bsx)=0$ if $x_j=0$ for some $j$}
\]
(with the obvious modification for $p=+\infty$).
Moreover, we would like the ratio 
\begin{equation}\label{ratio:C}
  C_{d,p}(\nu)\,:=\,\sup_{f\in F_{d,p}}
  \frac{\|g_f\|_{W_{d,p}}}{\|f\|_{F_{d,p}}}
\end{equation}
to be not only finite, but also small, as explained in the introduction,
see \eqref{dupa-3}.
Due to the tensor product form \eqref{prod-rep}, one can show 
(in a similar way to the derivation of the norm of $I_{d,\rho}$) that the
following holds.

\begin{proposition}\label{prop:univar}
The ratio $C_{d,p}(\nu)$ is fully determined by the univariate case, i.e., 
\[
C_{d,p}(\nu)\,=\,\left(C_{1,p}(\nu)\right)^d,
\qquad\mbox{where}\qquad
C_{1,p}(\nu)\,=\,\sup_{f\in F_{1,p}}
\frac{\|g_f\|_{W_{1,p}}}{\|f\|_{F_{1,p}}}.
\]
\end{proposition}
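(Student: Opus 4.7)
My plan is to identify $C_{d,p}(\nu)$ with the operator norm of an integral operator on $L_p$ whose kernel has tensor-product form, and then show such a norm factorizes.

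Starting from the representation \eqref{prod-rep} of $f\in F_{d,p}$ with $\|f\|_{F_{d,p}}=\|h_f\|_{L_p(D^d)}$, I substitute into $g_{f,\nu}$ from \eqref{trnsf}, obtaining
\[
g_{f,\nu}(\bst)\,=\,\int_{D^d}h_f(\bsz)\prod_{j=1}^d K(t_j,z_j)\rd\bsz,\qquad K(t,z)\,:=\,\kappa(\nu(t),z)\,\rho(\nu(t))\,\nu'(t).
\]
Because each factor $K(t_j,z_j)$ depends only on the single variable $t_j$, the mixed partial $\prod_j\partial/\partial t_j$ commutes with the $\bsz$-integral and acts on exactly one factor at a time, yielding
\[
\prod_{j=1}^d\frac{\partial g_{f,\nu}}{\partial t_j}(\bst)\,=\,\int_{D^d}h_f(\bsz)\prod_{j=1}^d M(t_j,z_j)\rd\bsz,\qquad M(t,z)\,:=\,\frac{\partial K}{\partial t}(t,z),
\]
interpreted in the distributional sense. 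Defining the univariate integral operator $(Th)(t):=\int_DM(t,z)\,h(z)\rd z$ from $L_p(D)$ to $L_p(B)$, the definition \eqref{ratio:C} reads $C_{1,p}(\nu)=\|T\|$, and the same computation in $d$ variables gives $C_{d,p}(\nu)=\|T^{\otimes d}\|$, where $T^{\otimes d}$ is the integral operator on $L_p(D^d)\to L_p(B^d)$ with product kernel $\prod_jM(t_j,z_j)$.

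It then remains to show $\|T^{\otimes d}\|=\|T\|^d$. For the upper bound I factor $T^{\otimes d}=S_1\cdots S_d$ where $S_j$ applies $T$ in the $j$-th coordinate alone; a single application of Fubini (using $L_p$ of a product measure is $L_p$-valued $L_p$) shows $\|S_j\|=\|T\|$, whence $\|T^{\otimes d}\|\le\|T\|^d$. For the lower bound I test on rank-one inputs $h=h_1\otimes\cdots\otimes h_d$: since $T^{\otimes d}h=\bigotimes_j Th_j$ and both $L_p$-norms split as products of $L_p$-norms of the factors, choosing each $h_j$ near-optimal for $T$ gives $\|T^{\otimes d}\|\ge(\|T\|-\varepsilon)^d$ for every $\varepsilon>0$. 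Combining these yields $C_{d,p}(\nu)=\|T\|^d=(C_{1,p}(\nu))^d$.

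The main technical obstacle is in the first step: the kernel $K(t,z)$ contains the discontinuous factor $\kappa(\nu(t),z)$, so $M=\partial K/\partial t$ is only a distribution (a jump along $\nu(t)=z$ plus an absolutely continuous remainder). One has to verify that, for general $h_f\in L_p(D^d)$, the mixed weak derivative of $g_{f,\nu}$ really equals the displayed integral. I would handle this by proving the identity first for compactly supported smooth $h_f$, where all manipulations are classical, and then extending by density since both sides are continuous in $h_f$ with respect to $\|\cdot\|_{L_p}$. Once this is in place, the factorization of the operator norm is formal, and the case $p=\infty$ proceeds identically with $\esup$ in place of the integral.
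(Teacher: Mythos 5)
Your route is essentially the one the paper intends: the paper states Proposition~\ref{prop:univar} without proof, pointing to the tensor-product representation \eqref{prod-rep} and the analogous factorization of $\|I_{d,\rho}\|$, and your identification $C_{d,p}(\nu)=\|T^{\otimes d}\|$ with $\|T^{\otimes d}\|=\|T\|^d$ (upper bound by applying $T$ one coordinate at a time via Fubini, lower bound on product inputs) is the natural operator version of that sketch. Note that the lower bound does not even need the operator formalism: for $f(\bsx)=\prod_{j}f_j(x_j)$ one has $g_{f,\nu}(\bst)=\prod_j g_{f_j,\nu}(t_j)$, and both $\|f\|_{F_{d,p}}$ and $\|g_{f,\nu}\|_{W_{d,p}}$ factor into univariate norms, giving $C_{d,p}(\nu)\ge (C_{1,p}(\nu))^d$ directly; this also settles the case $C_{1,p}(\nu)=+\infty$, so the upper-bound half only needs to be proved when $\|T\|=C_{1,p}(\nu)<+\infty$, which makes your density step legitimate for finite $p$.

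The one genuine over-claim is the final sentence that ``the case $p=\infty$ proceeds identically'': your identification of the mixed weak derivative of $g_{f,\nu}$ with $S_1\cdots S_d h_f$ is proved by density of smooth compactly supported $h_f$ in $L_p$, and that density fails in $L_\infty$. You need a substitute there, for instance: take truncations $h_n$ with $\|h_n\|_{L_\infty}\le\|h_f\|_{L_\infty}$ and $h_n\to h_f$ a.e.; then $g_{f_n,\nu}\to g_{f,\nu}$ locally uniformly on the open cube (using local boundedness of $\rho(\nu(t))\nu'(t)|\nu(t)|^{1/p^*}$ there) and the candidate derivatives converge in the sense of distributions, which identifies the weak derivative and lets the bound $\|T\|^d\|h_f\|_{L_\infty}$ pass to the limit; alternatively, compute the mixed derivative iteratively, one coordinate at a time, by the univariate product rule for absolutely continuous functions, which avoids density altogether. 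With that repair (and the routine measurability check when applying $T$ coordinate-wise to non-product $h$), the argument is complete and consistent with the paper.
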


This is why, in the analysis of the factor $C_{d,p}(\nu),$
we can restrict our considerations to the univariate case.

\begin{remark}\rm
Denote by $J_d$ the standard Lebesgue integral over the unit
cube $B^d$, i.e.,
\[J_d(g)\,=\,\int_{B^d} g(\bst)\rd \bst.
\]
Then its operator norm equals $\|J_d\|=\|J_1\|^d,$ where
\[
\|J_1\| = \left\{ \begin{array}{ll} 
                    (1+p^*)^{-1/p^*} &\quad \mbox{if}\ 1<p\le+\infty,\\
                       1 &\quad\mbox{if}\ p=1, \end{array}\right.
\] 
for $B=[0,1)$, and is twice smaller for $B=(-1/2,1/2)$. Furthermore, 
$$ |I_{d,\rho}(f)|=\bigg|\int_{B^d}g_f(\bst)\rd\bst\bigg|\le\|J_d\|\|g_f\|_{W_{d,p}}
     \le \|J_d\|C_{d,p}(\nu)\|f\|_{F_{d,p}}=\big(\|J_1\|C_{1,p}(\nu)\big)^d\|f\|_{F_{d,p}}. $$
Hence, if  $C_{1,p}(\nu)<+\infty$ for some $\nu$,
then the operator $I_{d,\rho}$  
is bounded in $F_{d,\rho},$ and its norm satisfies
$$  \|I_{d,\rho}\|\,\le\,\Big(\|J_1\|\, C_{1,p}(\nu)\Big)^d.$$
\end{remark}

\begin{remark}\rm
When $\rho_d$ is a product of different weights depending on $j$, then  
also $\nu$ being a product of different $\nu_j$ should be considered.
That is, if $\rho_d(\bsx)\,=\,\prod_{j=1}^d\rho_{1,j}(x_j)$, 
then one could consider $\nu(\bst)\,=\,\prod_{j=1}^d\nu_{j}(t_j)$. 
Then $C_{d,p}(\nu)\,=\,\prod_{j=1}^dC_{1,p}(\nu_{j})$.
\end{remark}

Before we propose our ways of changing variables, 
let us see what happens when we apply the standard change  
that has been commonly used in practice.

\paragraph{Standard Change of Variables.}
Let $\Phi_\rho :\bbR_+ \rightarrow [0,1)$ or 
$\Phi_\rho :\bbR \rightarrow (0,1)$ 
be the CDF corresponding to the probability density $\rho,$ defined by 
\begin{equation}\label{eq:cdf}
  \Phi_\rho(x)\,=\,\int_D\,1_{\{x\ge t\}}\,\rho(t) \rd t,
\end{equation}
where $1_A$ denotes the indicator function of a given set $A$. Then the standard change of variables uses
$\nu\,=\,\Phi^{-1}_\rho$, i.e.,
\begin{equation}\label{stndrd}
  \int_{D^d}f(\bsx)\,\rho(\bsx)\rd\bsx\,=\,\int_{[0,1]^d}
  g_{f,\Phi^{-1}_\rho}(\bst)\rd\bst,
\quad\mbox{where}\quad
g_{f,\Phi^{-1}_\rho}(\bst)\,=\,f\left(\Phi^{-1}_\rho(t_1),\dots,
\Phi^{-1}_\rho(t_d)\right). 
\end{equation}
We have the following simple yet important proposition.

\begin{proposition}\label{prop:std}
For any $f\in F_{d,p}$, let $g_{f,\Phi^{-1}_\rho}$ be the function given by
\eqref{stndrd}.   
\begin{description}
\item{(i)} For $p=1$, we have
\[\|f\|_{F_{d,1}}\,=\,\|g_{f,\Phi^{-1}_\rho}\|_{W_{d,1}}\quad\mbox{for all\ }
f\in F_{d,1}.  
\]  
\item{(ii)} For $p>1$, we have
\begin{equation}\label{eq:std-dupa}
\|g_{f,\Phi^{-1}_\rho}\|_{W_{d,p}}\,=\,
\left[\int_{D^d}\left|\left(\prod_{j=1}^d\frac\partial{\partial x_j} f(\bsx)\right)
\left(\prod_{j=1}^d\frac1{\rho^{1/p^*}(x_j)}\right)\right|^p
\rd\bsx\right]^{1/p}.
\end{equation}
Hence $g_{f,\Phi^{-1}_\rho}\in W_{d,p}$ if and only if the right-hand side
of \eqref{eq:std-dupa} is finite. 
\end{description}
\end{proposition}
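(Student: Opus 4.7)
The proof is essentially a direct chain-rule and change-of-variables computation, so the plan is to carry it out explicitly and then read off both parts (i) and (ii) from the same identity.

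First I would record the basic pointwise identity for the inverse CDF: since $\Phi_\rho'(x)=\rho(x)$ on $D$ and $\rho>0$, we have
\[
(\Phi_\rho^{-1})'(t)\,=\,\frac{1}{\rho(\Phi_\rho^{-1}(t))}
\quad\text{for a.e. } t\in B.
\]
Because $\Phi_\rho^{-1}$ is applied componentwise in each variable independently, the chain rule (in the weak sense, which is valid because $\Phi_\rho^{-1}$ is absolutely continuous and monotone on each interval of interest) gives
\[
\prod_{j=1}^d\frac{\partial}{\partial t_j}\,g_{f,\Phi^{-1}_\rho}(\bst)\,=\,
\Bigl(\prod_{j=1}^d\frac{\partial}{\partial x_j}f\Bigr)\bigl(\Phi^{-1}_\rho(t_1),\dots,\Phi^{-1}_\rho(t_d)\bigr)\,
\prod_{j=1}^d\frac{1}{\rho(\Phi^{-1}_\rho(t_j))}.
\]
This is the only nontrivial step, and it is straightforward here because the change of variables is a product of one-dimensional maps, so no cross terms arise in the iterated mixed derivative.

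Next I would substitute this into the definition of $\|g_{f,\Phi^{-1}_\rho}\|_{W_{d,p}}$ and perform the componentwise change of variables $x_j=\Phi^{-1}_\rho(t_j)$, whose Jacobian is $\prod_{j=1}^d\rho(x_j)$. For $p\in(1,\infty)$ this yields
\[
\|g_{f,\Phi^{-1}_\rho}\|_{W_{d,p}}^p\,=\,
\int_{D^d}\Bigl|\prod_{j=1}^d\tfrac{\partial}{\partial x_j}f(\bsx)\Bigr|^p\,
\prod_{j=1}^d\rho(x_j)^{1-p}\rd\bsx,
\]
and using the conjugate-exponent identity $1-p=-p/p^{*}$, i.e.\ $\rho(x_j)^{1-p}=\rho(x_j)^{-p/p^{*}}$, I can pull the weights inside the $p$-th power to recover exactly the right-hand side of \eqref{eq:std-dupa}, proving (ii). The case $p=\infty$ follows analogously with the essential supremum in place of the integral, and gives the corresponding formula with $1/p^{*}=1$. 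The "iff" in (ii) then just restates that finiteness of the $W_{d,p}$-norm is equivalent to finiteness of the displayed integral.

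For part (i), I would take $p=1$ in the same identity. Then $1-p=0$, so the weight factor disappears and we obtain
\[
\|g_{f,\Phi^{-1}_\rho}\|_{W_{d,1}}\,=\,
\int_{D^d}\Bigl|\prod_{j=1}^d\tfrac{\partial}{\partial x_j}f(\bsx)\Bigr|\rd\bsx\,=\,\|f\|_{F_{d,1}},
\]
which is the claimed isometry. No obstacle is expected beyond verifying the weak chain rule and the change of variables; both are standard because $\Phi_\rho$ is a monotone absolutely continuous bijection between $D$ and $B$, and $f\in F_{d,p}$ has a locally integrable mixed derivative by the representation \eqref{prod-rep}. The anchoring condition on $g_{f,\Phi^{-1}_\rho}$ is inherited from that of $f$ through $\Phi^{-1}_\rho(0)=0$ in the case $D=\bbR_+$, and is handled by the analogous convention in the case $D=\bbR$.
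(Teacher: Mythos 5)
Your proposal is correct and follows essentially the same argument as the paper: differentiate $g_{f,\Phi^{-1}_\rho}$ via the chain rule with $(\Phi_\rho^{-1})'(t)=1/\rho(\Phi_\rho^{-1}(t))$ and then change variables back to $D^d$, absorbing the Jacobian $\prod_j\rho(x_j)$ into the weight via $1-p=-p/p^*$. The only cosmetic difference is that the paper first reduces to $d=1$ using the tensor product structure, while you carry out the same computation directly in $d$ variables.
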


\begin{proof}
Due to the tensor product structure of the spaces, it is enough to 
prove the proposition for $d=1$. We clearly have 
\[
g'_{f,\Phi_\rho}(t)\,=\,f'(\Phi^{-1}_\rho(t))\,
\frac{\rd}{\rd t}\Phi^{-1}_\rho(t)
\,=\,f'(\Phi^{-1}_\rho(t))\,\frac1{\rho(\Phi^{-1}_\rho(t))}.
\]
Hence for $1\le p<+\infty$ we have
\begin{eqnarray*}
 \|g_{f,\Phi^{-1}_\rho}\|_{W_{1,p}}^{p}&=&
  \int_0^1|g'_{f,\Phi^{-1}_\rho}(t)|^p\rd t \, = \,
\int_0^1 \left|f'(\Phi^{-1}_\rho(t))\,
\frac1{\rho(\Phi^{-1}_\rho(t))}\right|^p\rd t \\
&=&\int_D\left|f'(x)\,\frac1{\rho(x)}\right|^p\,\rho(x)\,\rd x
\,=\,\int_D\left|f'(x)\,\frac{1}{\rho^{1/p^*}(x)}\right|^p\rd x, 
\end{eqnarray*}
(with $1/p^*=0$ for $p=1$), 
and for $p=+\infty$ we have
\begin{equation*}
  \|g_{f,\Phi^{-1}_\rho}\|_{W_{1,\infty}}=
  \esup_{t\in [0,1]}|g_{f,\Phi^{-1}_\rho}'(t)|
=\esup_{t\in [0,1]}\left|\frac{f'(\Phi_\rho^{-1}(t))}{\rho(\Phi_\rho^{-1}(t))}
\right| =\esup_{x\in D}\left|\frac{f'(x)}{\rho(x)}\right|,
\end{equation*}
as claimed. 
\end{proof}

Proposition \ref{prop:std} states that the change of variables
\eqref{stndrd} transforms all functions from $F_{d,p}$ to $W_{d,p}$
only for $p=1$. 
For $p>1$, the situation is quite different. Then $1/p^*>0$ and,
therefore, only 
those functions whose mixed first order partial derivatives converge
to zero sufficiently fast have the corresponding functions
$g_{f,\Phi^{-1}_\rho}$ in the space $W_{d,p}$. Indeed, since the weight
$\rho(x)$ has to converge to zero as $|x|\to\infty$, $\rho^{-1/p^*}(x)$
has to converge to infinity, resulting in a very restrictive class of
integrands $f$.
In particular, if $p^*=1$ then we need to have
\[
\esup_{\bsx\in D^d}\left|\frac{\prod_{j=1}^d\frac\partial{\partial x_j}
  f(\bsx)}{\prod_{j=1}^d\rho(x_j)}\right|\,<\,+\infty. 
\]
For instance, for $D=\bbR$ and Gaussian weight
\[
\rho(x)=\frac{1}{\sqrt{2\pi\sigma^2}}\,\exp\left(-\frac{x^2}{2\sigma^2}\right)
\] 
we would need 
\[
 \int_{\bbR^d} \exp\bigg(\frac{1}{2\sigma^2p^*}\sum_{j=1}^dx_j^2\bigg)
  \bigg|\prod_{j=1}^d\frac{\partial}{\partial x_j}f(\bsx)\bigg|^p\rd\bsx
   \,<\,+\infty \qquad\mbox{for\ }p\,<\,+\infty,
\]
and 
\[
\esup_{\bsx\in\bbR^d}\;  
\exp\bigg(\frac{1}{2\sigma^2}\sum_{j=1}^dx_j^2\bigg)
  \bigg|\prod_{j=1}^d\frac{\partial}{\partial x_j}f(\bsx)\bigg|\,<\,+\infty
  \qquad\mbox{for\ }p\,=\,+\infty,
\]
which may hold only if the derivative of $f$ decreases to zero faster than exponentially. This is why in the rest of the paper we concentrate on the case of $p>1$.

\smallskip
We now switch to the univariate case, but will return to the multivariate case 
in Section \ref{Sect:MDM} with some additional comments.

\section{Univariate Functions}
Recall that for the univariate case the space $F_{1,p}$ 
consists of functions anchored at zero and whose weak derivatives 
are in the $L_p(D)$ space. For $f\in F_{1,p}$ we want to know when 
the corresponding functions 
\begin{equation}\label{eq:change}
 g_f(t)\,=\,g_{f,\nu}(t)\,=\,f(\nu(t))\,\rho(\nu(t))\,\nu'(t)
\end{equation}
belong to the standard Sobolev space $W_{1,p}$ of functions
anchored at zero with $g'\in L_{p}(B)$. 

The following functions will play an important role:
\begin{eqnarray}
  h_{0,p^*}(t) &:=& \rho(\nu(t))\,\nu'(t)\,|\nu(t)|^{1/p^*},
  \label{H0} \\
h_{1,p^*}(t) &:=&\rho(\nu(t))\,(\nu'(t))^{1+1/p^*},
\label{H1} \\  
h_{2,p^*}(t) &:=& \left(\rho(\nu(t))\,\nu'(t)\right)'\,    
|\nu(t)|^{1/p^*}. \label{H2}
\end{eqnarray}
For this to make sense we obviously assume that the corresponding 
functions $\rho$ and $\nu$ are sufficiently regular. 

\subsection{Case of $D=\bbR_+$}
Let $\nu:[0,1)\to\bbR_+$ be an increasing and twice differentiable 
function such that
\[\nu(0)\,=\,0\quad\mbox{and}\quad\lim_{t\to1}\nu(t)\,=\,+\infty. 
\]

\begin{theorem}\label{thm:main}
For every $f\in F_{1,p}$, the corresponding function $g_f$
given by \eqref{eq:change} satisfies:
\begin{description}
\item{(i)} $\|g_f\|_{L_\infty(0,1)}<+\infty$\quad if \quad
$\|h_{0,p^*}\|_{L_\infty(0,1)}< +\infty$,
\item{(ii)} $g_f\in W_{1,p}$ if
$\|h_{1,p^*}\|_{L_{\infty}(0,1)}<+\infty$ and 
$\|h_{2,p^*}\|_{L_p(0,1)}\,<\,+\infty$.
If so, then 
\[
C_{1,p}(\nu)\,\le\,\|h_{1,p^*}\|_{L_\infty(0,1)}+
\|h_{2,p^*}\|_{L_p(0,1)}.
\]
In particular, for $p=+\infty$, 
\[
C_{1,\infty}(\nu)\,\le\,
\left\|\rho(\nu(\cdot))\,(\nu'(\cdot))^2\right\|_{L_\infty(0,1)}+
  \left\|(\rho(\nu(\cdot))\,\nu'(\cdot))'\,\nu(\cdot)
  \right\|_{L_\infty(0,1)}.
\]
\end{description}
\end{theorem}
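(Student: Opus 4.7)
The plan is to exploit the anchored structure of $F_{1,p}$ so that $f(\nu(t)) = \int_0^{\nu(t)} f'(z)\,\mathrm{d}z$, apply Hölder's inequality to the function $f\circ\nu$, and then differentiate $g_f$ to match each term against one of the functions $h_{j,p^*}$.

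First I would handle part (i). Since $f$ is anchored at $0$, Hölder gives
\[
|f(\nu(t))| \,=\, \left|\int_0^{\nu(t)} f'(z)\,\mathrm{d}z\right| \,\le\, \|f'\|_{L_p(\bbR_+)}\,|\nu(t)|^{1/p^*} \,=\, \|f\|_{F_{1,p}}\,|\nu(t)|^{1/p^*}.
\]
Multiplying by $\rho(\nu(t))\nu'(t)$ yields $|g_f(t)| \le \|f\|_{F_{1,p}}\,h_{0,p^*}(t)$, so taking the essential supremum on $(0,1)$ proves (i).

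For part (ii), by the product and chain rules,
\[
g_f'(t) \,=\, \underbrace{f'(\nu(t))\,\rho(\nu(t))\,(\nu'(t))^2}_{A(t)} \,+\, \underbrace{f(\nu(t))\,(\rho(\nu(t))\,\nu'(t))'}_{B(t)}.
\]
The term $B$ is easy: the same Hölder bound on $|f(\nu(t))|$ as above gives $|B(t)| \le \|f\|_{F_{1,p}}\,|h_{2,p^*}(t)|$, hence $\|B\|_{L_p(0,1)} \le \|f\|_{F_{1,p}}\,\|h_{2,p^*}\|_{L_p(0,1)}$. The slightly less routine step, which I regard as the main algebraic point, is for $A$: using the identity $p - p/p^* = 1$, we rewrite
\[
\rho(\nu(t))^p\,(\nu'(t))^{2p} \,=\, \bigl[\rho(\nu(t))\,(\nu'(t))^{1+1/p^*}\bigr]^{p}\,\nu'(t) \,=\, h_{1,p^*}(t)^{p}\,\nu'(t).
\]
Consequently, after pulling out $\|h_{1,p^*}\|_{L_\infty(0,1)}^p$ and changing variables $x = \nu(t)$,
\[
\|A\|_{L_p(0,1)}^p \,\le\, \|h_{1,p^*}\|_{L_\infty(0,1)}^p\int_0^1|f'(\nu(t))|^p\,\nu'(t)\,\mathrm{d}t \,=\, \|h_{1,p^*}\|_{L_\infty(0,1)}^p\,\|f\|_{F_{1,p}}^p.
\]
The triangle inequality $\|g_f'\|_{L_p(0,1)} \le \|A\|_{L_p(0,1)} + \|B\|_{L_p(0,1)}$ then yields the claimed bound on $C_{1,p}(\nu)$; the anchoring $g_f(0)=0$ follows from $\nu(0)=0$ and $f(0)=0$. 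The case $p=+\infty$ reduces to $p^*=1$, so $h_{1,\infty}(t) = \rho(\nu(t))(\nu'(t))^2$ and $h_{2,\infty}(t) = (\rho(\nu(t))\nu'(t))'\,\nu(t)$, reproducing the displayed inequality, with the change-of-variables step replaced by the trivial bound $|f'(\nu(t))|\le\|f\|_{F_{1,\infty}}$. Minor care with the essential-supremum vs.\ $L^p$ inequalities and with the existence of $(\rho\circ\nu\cdot\nu')'$ (granted by the regularity hypotheses on $\nu$ and $\rho$) finishes the proof.
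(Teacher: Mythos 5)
Your proposal is correct and follows essentially the same route as the paper's proof: the same Hölder bound $|f(\nu(t))|\le \|f\|_{F_{1,p}}|\nu(t)|^{1/p^*}$ for part (i), the same decomposition $g_f'=A+B$, the bound of $B$ by $h_{2,p^*}$, and the identical trick of splitting off a factor $\nu'(t)$ (equivalently $(\nu'(t))^{1/p}$) as the Jacobian so that the remaining factor is $h_{1,p^*}$ and the change of variables $x=\nu(t)$ yields $\|f'\|_{L_p(\bbR_+)}$. The handling of the anchoring and of the easier case $p=+\infty$ also matches the paper.
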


\begin{proof}
To show {\it (i)}, observe that, by H\"{o}lder's inequality,
\begin{eqnarray*}
|g_{f}(t)|&=&\rho(\nu(t))\,\nu'(t)\,\left|\int_0^{\nu(t)}
f'(z)\rd z\right|\,\le\,\rho(\nu(t))\,\nu'(t)\,\left(\int_0^{\nu(t)}\rd z
\right)^{1/p^*}\,\|f'\|_{L_p(\bbR_+)}\\
&= & h_{0,p^{\ast}}(t)\,\|f'\|_{L_p(\bbR_+)}. 
\end{eqnarray*}

We next prove {\it (ii)}. 
Obviously $g_f(0)=0$ since $\nu(0)=0$. We will prove the upper bound
on $C_{1,p}(\nu)$ only for $1<p<+\infty$ since the case of
$p=+\infty$ is even easier. We have 
\begin{eqnarray*}
g'_f(t)&=&f'(\nu(t))\,\rho(\nu(t))\,\left(\nu'(t)\right)^2
  +f(\nu(t))\,\left(\rho(\nu(t))\,\nu'(t)\right)'\\
&=&f'(\nu(t))\,\left(\nu'(t)\right)^{1/p}\,
  \rho(\nu(t))\,\left(\nu'(t)\right)^{1+1/p^*}
 +\left(\rho(\nu(t))\,\nu'(t)\right)'\,
  \int_0^{\nu(t)} f'(z)\rd z.
\end{eqnarray*}
Therefore
\begin{eqnarray*}
  \|g'_f\|_{L_p({[0,1)})} &\le& \left(\int_0^1|f'(\nu(t))|^p\,
    \nu'(t)\,\left(\rho(\nu(t))\,(\nu'(t))^{1+1/p^*}
    \right)^p\rd t\right)^{1/p}\\
&&+\left(\int_0^1|(\rho(\nu(t))\,\nu'(t))'|^p\,
  \left|\int_0^{\nu(t)} f'(z)\rd z\right|^p\rd t\right)^{1/p}.
\end{eqnarray*}
Clearly
\[
\int_0^1\left|f'(\nu(t))\right|^p\,\nu'(t)\rd t
=\int_0^\infty\left|f'(x)\right|^p\rd x=\|f'\|_{L_p(\bbR_+)}^p
\]
and, again by H\"{o}lder's inequality,
\[\left|\int_0^{\nu(t)} f'(z)\rd z\right|=\left|\int_0^\infty f'(z)\,1_{\{\nu(t)\ge z\}}\rd z\right|\le
\|f'\|_{L_p(\bbR_+)}\,(\nu(t))^{1/p^*}.
\]
Hence indeed,
\begin{eqnarray*}
\|g'\|_{L_p(0,1)}  
  &\le&\left(\int_0^1|f'(\nu(t))|^p\,\nu'(t)\rd t\right)^{1/p}\,
  \esup_{t\in[0,1)}\rho(\nu(t))\,(\nu'(t))^{1+1/p^*}\\
  &&+\left(\int_0^1|\left(\rho(\nu(t))\,\nu'(t)\right)'|^p\,
\|f'\|_{L_p(\bbR_+)}^{{p}}\,(\nu(t))^{p/p^*}\rd t \right)^{1/p}\\
&\le&\|f'\|_{L_p(\bbR_+)}\,\|\rho(\nu(\cdot))\,(\nu'(\cdot))^{1+1/p^*}
  \|_{L_\infty (0,1)}\\
&&  +\|f'\|_{L_p(\bbR_+)}\,
    \left\|\left(\rho(\nu(\cdot))\,\nu'(\cdot)\right)'\,
    (\nu(\cdot))^{1/p^*} \right\|_{L_p(0,1)}\\
 & = & \|f'\|_{L_p(\bbR_+)}\,\left(\|h_{1,p^*}\|_{L_{\infty}(0,1)}
+\|h_{2,p^*}\|_{L_p(0,1)}\right). 
\end{eqnarray*}
This completes the proof. 
\end{proof}

We give an example for an application of Theorem~\ref{thm:main}.

\begin{example}\label{ex:6} \rm
Let $D=\bbR_+$ and $\rho(x)=(c-1) (1+x)^{-c}$ for sufficiently large
$c$. We use Theorem~\ref{thm:main} for 
\[\nu_b(t)\,=\,\frac{1}{(1-t)^b}-1\quad\mbox{for\ }b\,>\,0.
\]
Then
\[
 \nu_b'(t)\,=\,\frac{b}{(1-t)^{b+1}}\quad\mbox{and}\quad
 \rho(\nu_b(t))\,=\,(c-1)\,(1-t)^{b\,c}.
\]
Therefore, for $p=+\infty$ we have 
\[
h_{1,1}(t)\,=\,b^2\,(c-1)\,(1-t)^{b\,(c-2)-2 }\quad\mbox{and}\quad
\|h_{1,1}\|_{L_\infty(0,1)}\,=\,b^2\,(c-1)
\]
for 
\[
b\,\ge\,\frac2{c-2}.
\]
Otherwise, $h_{1,1}$ is unbounded. As for $h_{2,1}$, we have 
\[\left(\rho(\nu_b(t))\,\nu_b'(t)\right)'\,=\,b\,(c-1)\,(b\,(c-1)-1)\,
(1-t)^{b(c-1)-2}
\]
and
\begin{eqnarray*}
h_{2,1}(t) &=&b\,(c-1)\,(b\,(c-1)-1)(1-t)^{b\,(c-1)-2}\,\left(\frac1{(1-t)^b}
-1\right)\\
&=&b\,(c-1)\,(b\,(c-1)-1)\left((1-t)^{b\,(c-2)-2}-
(1-t)^{b\,(c-1)-2}\right).
\end{eqnarray*}
Hence,
\begin{eqnarray*}
\|h_{2,1}\|_{L_\infty(0,1)} & = &  \esup_{x \in (0,1)} b\,(c-1)\,(b\,(c-1)-1)\left(x^{b\,(c-2)-2}-
x^{b\,(c-1)-2}\right) \\
& = & h_{2,1}\left(1-\left(\frac{b(c-2)-2}{b(c-1)-2}\right)^{1/b}\right)\\
& = & \frac{b^2 (c-1) (b(c-1)-1)}{b(c-1)-2} \left(\frac{b(c-2)-2}{b(c-1)-2}\right)^{c-2-\frac{2}{b}},
\end{eqnarray*}
and for $b >2/(c-2)$ we have $$C_{1,\infty}(\nu_b) \le b^2 (c-1)
\left(1+\frac{b(c-1)-1}{b(c-1)-2} \left(\frac{b(c-2)-2}{b(c-1)-2}\right)^{c-2-\frac{2}{b}}\right).$$
In view of what follows we mention that here for the
change of variables $\nu(t)=b\,\Phi_\rho^{-1}(t)$, we have
$\|h_{0,p^*}\|_{L_\infty(B)}=\|h_{1,p^*}\|_{L_\infty(B)}=
\|h_{2,p^*}\|_{L_p(B)}=+\infty$ for any $p>1$.   
\end{example}

\subsection{Case of $D=\bbR$}
In this case, we consider $\nu:(-1/2,1/2)\to\bbR$ that is increasing,
twice differentiable, and for which
\[\nu(0)\,=\,0\quad\mbox{and}\quad
\lim_{t\to\pm1/2}\nu(t)\,=\,\pm\infty. 
\]
To simplify the presentation, we also assume that 
the function $\rho$ is even and $\nu$ is odd, i.e.,
\begin{equation}\label{ass:1}
\rho(-x)\,=\,\rho(x)\quad\mbox{and}\quad \nu(-t)\,=\,-\nu(t),
\end{equation}
since then it is sufficient to consider only positive values of $t$.
However, if this assumption is not satisfied then one can treat the problem
as two subproblems: one with the domain $D=[0,+\infty)$ and the other with
$D=(-\infty,0]$. 

The following result is a version of Theorem~\ref{thm:main} adapted to the case $D=\bbR$. Due to \eqref{ass:1}, the proposition follows easily from the proof of Theorem \ref{thm:main}, and hence its proof is omitted.

\begin{theorem}\label{prop:main} 
If $\|h_{1,p^*}\|_{L_\infty(-1/2,1/2)}<+\infty$ and 
$\|h_{2,p^*}\|_{L_p(-1/2,1/2)}<+\infty$, then, 
for every $f\in F_{1,p}$, the corresponding
function $g_f$ belongs to $W_{1,p}$ and 
\[
C_{1,p}(\nu)\,\le\,\|h_{1,p^*}\|_{L_\infty(-1/2,1/2)}+
\|h_{2,p^*}\|_{L_{p}(-1/2,1/2)}
\]
with $h_{1,p^*}$ and $h_{2,p^*}$ defined by \eqref{H1} and \eqref{H2}, respectively. 
\end{theorem}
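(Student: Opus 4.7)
The plan is to follow the proof of Theorem~\ref{thm:main} almost verbatim, now on the symmetric interval $(-1/2,1/2)$ and with $D=\bbR$, using the kernel representation $f(x)=\int_{\bbR}\kappa(x,z)\,f'(z)\,\rd z$ from \eqref{prod-rep} to control $|f(\nu(t))|$ uniformly in sign. The assumption \eqref{ass:1} enters only to ensure $\nu(0)=0$ (so that $g_f(0)=0$ matches the anchoring) and, incidentally, to observe that $h_{1,p^*}$ and $h_{2,p^*}$ are even in $t$, so their norms over $(-1/2,1/2)$ could equally be read off from the half-interval $(0,1/2)$.

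First, I would differentiate $g_f(t)=f(\nu(t))\,\rho(\nu(t))\,\nu'(t)$ by the product rule and regroup to obtain
\[
g_f'(t)\;=\;f'(\nu(t))\,(\nu'(t))^{1/p}\cdot h_{1,p^*}(t)\;+\;f(\nu(t))\,\bigl(\rho(\nu(t))\,\nu'(t)\bigr)'.
\]
For the first summand I would bound its $L_p(-1/2,1/2)$ norm by $\|h_{1,p^*}\|_{L_\infty(-1/2,1/2)}\bigl(\int_{-1/2}^{1/2}|f'(\nu(t))|^p\,\nu'(t)\,\rd t\bigr)^{1/p}$ and apply the substitution $x=\nu(t)$, which is a bijection of $(-1/2,1/2)$ onto $\bbR$, to turn the remaining integral into $\|f'\|_{L_p(\bbR)}^p$. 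This contributes $\|h_{1,p^*}\|_{L_\infty(-1/2,1/2)}\|f'\|_{L_p(\bbR)}$.

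For the second summand I would use that $f$ is anchored at zero: by H\"older's inequality applied to the representation $f(\nu(t))=\int_{\bbR}\kappa(\nu(t),z)\,f'(z)\,\rd z$,
\[
|f(\nu(t))|\;\le\;\Bigl(\int_{\bbR}|\kappa(\nu(t),z)|^{p^*}\,\rd z\Bigr)^{1/p^*}\|f'\|_{L_p(\bbR)}\;=\;|\nu(t)|^{1/p^*}\,\|f'\|_{L_p(\bbR)},
\]
the absolute value absorbing both signs of $\nu(t)$; this is the key two-sided point, and it handles $t<0$ and $t>0$ in one stroke. Multiplying by $|(\rho(\nu(t))\,\nu'(t))'|$ identifies the modulus of the second summand pointwise with $|h_{2,p^*}(t)|\cdot\|f'\|_{L_p(\bbR)}$, so its $L_p(-1/2,1/2)$ norm is at most $\|h_{2,p^*}\|_{L_p(-1/2,1/2)}\|f'\|_{L_p(\bbR)}$. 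The triangle inequality in $L_p$ then gives
\[
\|g_f'\|_{L_p(-1/2,1/2)}\;\le\;\|f'\|_{L_p(\bbR)}\bigl(\|h_{1,p^*}\|_{L_\infty(-1/2,1/2)}+\|h_{2,p^*}\|_{L_p(-1/2,1/2)}\bigr),
\]
and dividing by $\|f\|_{F_{1,p}}=\|f'\|_{L_p(\bbR)}$ yields the asserted bound on $C_{1,p}(\nu)$.

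I do not expect a genuine obstacle: the argument is structurally identical to that of Theorem~\ref{thm:main}, and the only new feature is that the endpoint at $0$ has moved to the interior of $(-1/2,1/2)$. Thanks to the $|\,\cdot\,|^{1/p^*}$ appearing in the definitions \eqref{H0}--\eqref{H2}, the inequality $|f(\nu(t))|\le|\nu(t)|^{1/p^*}\|f'\|_{L_p(\bbR)}$ is symmetric in sign, so no case split into $t>0$ and $t<0$ is required. The case $p=+\infty$ is handled by passing to essential suprema throughout, exactly as in Theorem~\ref{thm:main}.
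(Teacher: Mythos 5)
Your proof is correct and is essentially the adaptation of the proof of Theorem~\ref{thm:main} that the paper itself has in mind when it omits the argument for the case $D=\bbR$: the same splitting of $g_f'$ and the same H\"older estimates, with the kernel bound $|f(\nu(t))|\le|\nu(t)|^{1/p^*}\,\|f'\|_{L_p(\bbR)}$ handling both signs of $\nu(t)$. The only (harmless) difference is that your two-sided kernel argument does not actually need the symmetry assumption \eqref{ass:1}, which the paper invokes merely to reduce to $t>0$.
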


\subsection{Special Change of Variables}
We propose to use the following change of variables. 
If $D=\bbR_+,$ define $\nu=\nu_a:[0,1) \rightarrow \bbR$ by 
\begin{equation}\label{special}
\nu(t)\,=\,\nu_a(t)\,=\, a\,\Phi_\rho^{-1}(t). 
\end{equation}
If $D=\bbR,$ define  $\nu=\nu_a: (-1/2,1/2) \rightarrow \bbR$ by
\begin{equation}\label{special1}
\nu(t)\,=\,\nu_a(t)\,=\, a\,\Phi_\rho^{-1}\left(t+\frac{1}{2}\right), 
\end{equation}
where in both cases $a\ge 1$ and, as before, $\Phi_\rho^{-1}$ is the inverse
of the CDF for $\rho$.

\begin{proposition}\label{prop:aPhi}
Let $\nu_a$ be given by \eqref{special} or
\eqref{special1}, respectively. Then
\begin{eqnarray*}
\|h_{0,p^*}\|_{L_\infty({B})} & = & a^{1+1/p^*}\,\esup_{y\in D}
\frac{\rho(a\,y)}{\rho(y)}\,|y|^{1/p^*}, \\
\|h_{1,p^*}\|_{L_\infty({B})} & = & a^{1+1/p^*}\,\esup_{y\in D}
  \frac{\rho(a\,y)}{(\rho(y))^{1+1/p^*}}\, , \ \mbox{ and }\\
\|h_{2,p^*}\|_{L_p({B})} & = &
a^{1+1/p^*}\,\left(\int_D\left|\left(\frac{\rho(a\,y)}{\rho(y)}\right)'
\,\frac{y^{1/p^*}}{\rho(y)}\,\right|^p\,\rho(y)\rd y\right)^{1/p},
\end{eqnarray*}
where, as before, $B=[0,1)$ if $D=\bbR_+$ and $B=(-1/2,1/2)$ if $D=\bbR$. 
\end{proposition}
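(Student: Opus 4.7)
The plan is to unify both cases (\ref{special}) and (\ref{special1}) via the substitution $y=\Phi_\rho^{-1}(t)$ (respectively $y=\Phi_\rho^{-1}(t+1/2)$), which is a bijection from $B$ to $D$ with $\rd t=\rho(y)\rd y$ and with the property that $\nu_a(t)=ay$ along the change of variables. Then each $h_{i,p^*}$ becomes a function of $y\in D$, and the stated formulas follow by straightforward algebra plus one change of variables in the $L_p$ case.

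First I would record the basic derivative identity
\[
\nu_a'(t)\,=\,a\,\big(\Phi_\rho^{-1}\big)'(\cdot)\,=\,\frac{a}{\rho(\Phi_\rho^{-1}(\cdot))}\,=\,\frac{a}{\rho(y)}.
\]
Plugging into \eqref{H0} and \eqref{H1} and simplifying gives, along the substitution,
\[
h_{0,p^*}(t)\,=\,\rho(ay)\cdot\frac{a}{\rho(y)}\cdot|ay|^{1/p^*}\,=\,a^{1+1/p^*}\,\frac{\rho(ay)}{\rho(y)}\,|y|^{1/p^*},
\]
\[
h_{1,p^*}(t)\,=\,\rho(ay)\left(\frac{a}{\rho(y)}\right)^{1+1/p^*}\,=\,a^{1+1/p^*}\,\frac{\rho(ay)}{(\rho(y))^{1+1/p^*}}.
\]
Since the substitution $y=\Phi_\rho^{-1}(t)$ (or its shifted version) is a homeomorphism of $B$ onto $D$, the $L_\infty$ norms over $B$ equal the corresponding essential suprema over $D$, giving the first two claimed identities.

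For $h_{2,p^*}$ the computation requires one more step. Observe that $\rho(\nu_a(t))\nu_a'(t)=a\,\rho(ay)/\rho(y)$, so by the chain rule together with $\rd y/\rd t=1/\rho(y)$,
\[
(\rho(\nu_a(t))\,\nu_a'(t))'\,=\,\frac{\rd}{\rd t}\left(a\,\frac{\rho(ay)}{\rho(y)}\right)\,=\,\frac{a}{\rho(y)}\left(\frac{\rho(ay)}{\rho(y)}\right)',
\]
where the prime inside the parentheses denotes the derivative in $y$. Multiplying by $|\nu_a(t)|^{1/p^*}=a^{1/p^*}|y|^{1/p^*}$ yields
\[
h_{2,p^*}(t)\,=\,a^{1+1/p^*}\left(\frac{\rho(ay)}{\rho(y)}\right)'\,\frac{|y|^{1/p^*}}{\rho(y)}.
\]

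Finally, to obtain the $L_p$ identity, I would compute
\[
\|h_{2,p^*}\|_{L_p(B)}^p\,=\,\int_B|h_{2,p^*}(t)|^p\rd t
\]
and apply the change of variables $t\mapsto y$ using $\rd t=\rho(y)\rd y$, so that the extra factor of $\rho(y)$ appears inside the integral. Taking the $p$-th root reproduces the third formula exactly. The only subtle point is ensuring the chain rule step is justified (which is immediate from the assumed regularity of $\rho$ and $\nu$ underlying \eqref{H2}); apart from this the argument is entirely mechanical, and the odd/even symmetry assumption \eqref{ass:1} for the case $D=\bbR$ guarantees that the absolute value $|y|^{1/p^*}$ behaves compatibly across the substitution on both halves of $B=(-1/2,1/2)$.
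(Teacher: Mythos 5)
Your proposal is correct and follows essentially the same route as the paper: the substitution $t=\Phi_\rho(y)$ (shifted by $1/2$ when $D=\bbR$), together with $\nu_a'(t)=a/\rho(y)$ and $\rd t=\rho(y)\rd y$, which the paper states tersely and you simply carry out in more detail (including the chain-rule computation for $h_{2,p^*}$). No gaps.
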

\begin{proof}
The result follows from a change of variables. If $D=\bbR_+$, then 
replace $t$ by $\Phi_\rho(y)$ and use the fact that 
\[
\nu'_a(t)\,=\,\frac{a}{\rho(\Phi_\rho^{-1}(t))}\,=\,
\frac{a}{\rho(y)}\quad\mbox{and}\quad
\rd t\,=\,\rho(y)\rd y.
\]
If $D=\bbR$, then replace $t$ by $\Phi_\rho(y)-\frac{1}{2}$ and
use the fact that
\[\nu'_a(t)\, = \, \frac{a}{\rho(\Phi_\rho^{-1}(t+1/2))}=\frac{a}{\rho(y)}
\quad\mbox{and}\quad \rd t\,=\,\rho(y)\rd y.
\]
This completes the proof.
\end{proof}

\begin{remark}\rm Note that for $p=1$, the exponent $1/p^*=\infty$.
Hence, for $a=1$, we have
\[\|h_{0,\infty}\|_{L_\infty(B)}\,=\,\|h_{1,\infty}\|_{L_\infty(B)}
\,=\,1\quad\mbox{and}\quad \|h_{2,\infty}\|_{L_{1}(B)}\,=\,0.
\]
This coincides with Part {\it (i)} of Proposition~\ref{prop:std}.
\end{remark}

\section{Particular Weights}
We illustrate Proposition \ref{prop:aPhi} for exponential and Gaussian
densities $\rho.$

\subsection{Exponential $\rho$}
We 
consider $D=\bbR_+$ and 
\[
\rho(x)\,=\,\frac1\lambda\,\exp\left(-\frac{x}{\lambda}\right)\quad
\mbox{for}\quad \lambda\,>\, 0.
\]
Clearly 
\[
\|I_{1,\rho}\|\,=\,
\left(\frac\lambda{p^*}\right)^{1/p^*}.   
\]
We use the change of variables
\begin{equation}\label{eqchangerho}
\nu_a(t)\,=\,a\, \Phi_{\rho}^{-1}(t)\,=\,-\lambda\, a\, \ln(1-t).
\end{equation}
It is easy to see that the norm of $h_{0,p^*}$ is infinite for
$a\le1$. 
Assume that $a>1$. Then 
\[\|h_{0,p^*}\|_{L_\infty (0,1)}\,=\,a^{1+1/p^*}\,
\left(\frac\lambda{{\mathrm{e}}\,p^*\,(a-1)}\right)^{1/p^*}.
\]
We also have 
\[\|h_{1,p^*}\|_{L_\infty(0,1)}\,=\,
  a^{1+1/p^*} \lambda^{1/p^*}\,
\sup_{y\in\bbR_+}\exp\left(-y\,\frac{a-(1+1/p^*)}\lambda\right),
\]
and hence 
\[\|h_{1,p^*}\|_{L_\infty{(0,1)}}\,=\,
\left\{\begin{array}{ll} a^{1+1/p^*}\lambda^{1/p^*} & \mbox{if\ }
a\,\ge\,1+\tfrac1{p^*},\\
+\infty &\mbox{otherwise,}
\end{array}\right.
\]
for any $p\in(1,+\infty]$.

In order to analyze $h_{2,p^*},$ we consider first $p=+\infty$. It is easy to see that then 
\[\|h_{2,1}\|_{L_\infty(0,1)}\,=\,
  \frac{a^2(a-1) \lambda}{(a-2) \mathrm{e}}\ \ \ \mbox{for $a>2$}
\]
and
\[C_{1,\infty}(\nu_a)\le a^2 \lambda \left(1+\frac{a-1}{(a-2) {\rm e}}\right)
\ \ \ \mbox{for $a>2$.}
\]
The upper bound on $C_{1,\infty}(\nu_a)$ is minimal for
\begin{equation}\label{eq:a*_p_infty}
a^*\,=\,2+\frac{4}{\sqrt{17+16\,{\rm e}}+1}\,=\,2.4557\ldots
\quad\mbox{and then}\quad C_{1,\infty}(\nu_{a^*})\,
\le\, \lambda \times 13.1172\ldots.
\end{equation}

Consider now finite $p>1$. Observe that 
\[\left(\frac{\rho(a\,y)}{\rho(y)}\right)'\,=\,
-\frac{a-1}\lambda\,\exp\left(-y\,\frac{a-1}\lambda\right)
\]
which results in
\[\|h_{2,p^*}\|_{L_p(0,1)}\,=\,
\frac{a^{1+1/p^*}\,(a-1)}{\lambda^{1/p}}\,
\left(\int_0^\infty y^{p-1}\,\exp\left(-y\,\frac{p\,(a-2)+1}\lambda
\right)\rd y\right)^{1/p},
\]
where we used the fact that $p/p^*=p-1$.

Suppose for the rest of this section  that $p$ is an integer.
Using integration by parts $p-1$ times, one can show that for any $c>0$, 
\[\int_0^\infty y^{p-1}\,\exp(-y\,c)\rd y\,=\,\frac{(p-1)!}{c^p},
\]
and conclude that 
\[\|h_{2,p^*}\|_{L_p(0,1)}\,=\,\frac{(a-1)\,a^{1+1/p^*}}
{p\,(a-2)+1}\, \lambda^{1/p^*}\,((p-1)!)^{1/p}, 
\]
whenever $a>1+1/p^*$. Finally,
\begin{equation}\label{help1}
C_{1,p}(\nu_a)\,\le\,a^{1+1/p^*}\,\lambda^{1/p^*}\,
\left(1 + \frac{(a-1)\,((p-1)!)^{1/p}}{p\,(a-2)+1}\right)\,<\,+\infty
\end{equation}
for $a>1+1/p^*$.

The upper bound on $C_{1,p}(\nu_a)$ is minimal for 
\begin{eqnarray*}
a^* & = &  2p (2p-1)^2 +(7 p^2-6p+1) ((p-1)!)^{1/p}\\
& &+\frac{\sqrt{p-1} ((p-1)!)^{1/(2p)} \sqrt{4 p^2 (2p-1)^2+(17 p^3-19 p^2+7 p -1) ((p-1)!)^{1/p}}}{2p (2p-1) (p+((p-1)!)^{1/p})}.
\end{eqnarray*}
For example, for $p=p^*=2$, 
\[a^*\,=\,\frac{53+\sqrt{217}}{36}\ =\ 1.8814\ldots
\]
which results in the upper bound
\[C_{1,2}(\nu_{a^*})\, \le\, \frac{(5+\sqrt{217})\,(53+\sqrt{217})^{3/2}}
{144\, (\sqrt{217}-1)}\,
\sqrt{\lambda}\, =\, \sqrt{\lambda}\,\times\, 5.5624\ldots .
\]

\begin{test}\label{Test1} \rm
Consider $f(x)=x$ and 
$\rho(x)=\exp(-x)$. Clearly, $f$ is in our space for $p=+\infty$ 
and $\|f\|_{F_{1,\infty}}=1$.
The value of the integral is $1$, and we use the change of variables 
$\nu_a$ as outlined above. Then the corresponding integrand is equal to 
\[g_{f,\nu_a}(t)\,=\,-a^2\,(\ln(1-t))\,(1-t)^{a-1}.
\]
Note that for $a=1$ the function $g_{f,\nu_a}(t)=-\ln(1-t)$ has a singularity at $t=1,$
for $1<a\le 2$ its derivative has a singularity at $t=1,$ and for $a>2$ the singularity
of $g_{f,\nu_a}'$ is removed and $g_{f,\nu_a}\in W_{1,\infty}$.

In the following table, we compare the integration errors of the midpoint rule
with $n$ samples applied to $g_{f,\nu_a}$ with $a=a^*$  as in \eqref{eq:a*_p_infty}, $a=1.5,$ and $a=1$. 
\[\begin{array}{lccc}
  n  &  a=a^*=2.4557\ldots & a=1.5 &  a=1 \\
\hline\\
10   & 4.353949E-03 & 1.118346E-02 & 3.424093E-02 \\
10^2 & 3.471053E-05 & 6.488305E-04 & 3.461569E-03 \\
10^3 & 2.958141E-07 & 3.029058E-05 & 3.465319E-04 \\
10^4 & 2.707151E-09 & 1.271297E-06 & 3.465694E-05 \\
10^5 & 2.596101E-11 & 5.015712E-08 & 3.465732E-06 \\ 
\end{array}
\]
\end{test}

\subsection{Gaussian $\rho$}
Consider $D=\bbR$ and 
\[
\rho(x)\,=\,\frac1{\sigma\,\sqrt{2\,\pi}}\,
\exp\left(-\frac{x^2}{2\,\sigma^2}\right)\quad\mbox{for\ }\sigma\,>\,0.
\]
Then
\[
\|I_{1,\rho}\|\,=\,\left(2^{1-p^*}\,\int_0^\infty\left(1-
   {\rm erf}\left(\frac{z}{\sqrt{2}\,\sigma}\right)\right)^{p^*}
     \rd z\right)^{1/p^*}\quad\mbox{for\ }p>1,
\]
where erf is the {\it Gauss error function} defined as ${\rm erf}(x) =
\tfrac{2}{\sqrt{\pi}} \int_0^x {\rm e}^{-t^2} \rd t$, and
\[\|I_{1,\rho}\|\,=\,1\quad\mbox{for\ }p=1.
\]
Note that $1-{\rm erf}(x)= \tfrac{2}{\sqrt{\pi}} \int_x^{\infty} {\rm e}^{-t^2} \rd t \le {\rm e}^{-x^2}$ for $x >0$ and 
hence $\|I_{1,\rho}\|< +\infty$ for all $p\ge 1$.

By Proposition~\ref{prop:aPhi}, for
\[
  \nu(t)\,=\,\nu_a(t)\,=\,a\,\Phi_\rho^{-1}(t+1/2)\quad\mbox{for\ }
t\,\in\,(-1/2,1/2),
\]
we get
\[ \|h_{0,p^*}\|_{L_\infty(-1/2,1/2)}\,=\,a\,
\left(\frac{a\,\sigma}{\sqrt{{\rm e}\,p^*\,(a^2-1)}}\right)^{1/p^*}.
\]
Moreover 
\[\| h_{1,p^*}\|_{L_\infty (-1/2,1/2)}
\,=\,
 a^{1+1/p^*} (\sigma \sqrt{2 \pi})^{1/p^*} \esup_{y\in\bbR} \exp\left(-\frac{y^2}{2\sigma^2}\left(a^2-\left(1+\frac{1}{p^*}\right)\right)\right).
\]
 Hence 
\[
\|h_{1,p^*}\|_{L_\infty (-1/2,1/2)}\,=\,\left\{\begin{array}{ll} \infty
&\mbox{if\ }a^2\,<\,1+\tfrac1{p^*},\\[0.5em]
c_1(a) &\mbox{if\ }a^2\,\ge\,1+\tfrac1{p^*},
\end{array}\right.
\]
where
\[c_1(a)\,=\,a^{1+1/p^*}\,(\sigma\,\sqrt{2\,\pi})^{1/p^*}.\]
Again, by Proposition \ref{prop:aPhi} we get 
\begin{eqnarray}\label{eq:normh2} 
 \lefteqn{\norm{h_{2,p^*}}_{L_p (-1/2,1/2)}=a^{1+1/p^*} \left(\int_{-\infty}^\infty \abs{\left(\frac{\rho (ay)}{\rho (y)}\right)'}^p (\rho(y))^{1-p} \abs{y}^{p/p^*}\rd y\right)^{1/p} 
 }\nonumber\\
 &=& \frac{a^{1+1/p^*}}{(\sigma \sqrt{2\pi})^{1/p-1}}\nonumber\\
 &&\times\left(\int_{-\infty}^\infty\left[\exp\left(-\frac{y^2}{2\sigma^2}(a^2-1)\right)\abs{\frac{y}{\sigma^2}(a^2-1)}
 \right]^p \exp\left(-\frac{y^2(1-p)}{2\sigma^2}\right) \abs{y}^{p/p^*}\rd y\right)^{1/p}\nonumber\\
 &=& \frac{a^{1+1/p^*}}{(\sigma \sqrt{2\pi})^{1/p-1}} \frac{a^2-1}{\sigma^2} 
 \left(\int_{-\infty}^\infty \exp\left(-\frac{y^2}{2\sigma^2}(a^2-1)p\right)
 \exp\left(-\frac{y^2 (1-p)}{2\sigma^2}\right)\abs{y}^{p+p/p^*}\rd y\right)^{1/p}\nonumber\\
  &=& \frac{a^{1+1/p^*}(a^2-1)}{(\sqrt{2\pi})^{1/p-1}\sigma^{1/p+1}} 
 \left(\int_{-\infty}^\infty \exp\left(-\frac{y^2}{2\sigma^2}((a^2-2)p+1)\right)
 \abs{y}^{p+p/p^*}\rd y\right)^{1/p}.
\end{eqnarray}
For $p$ tending to $+\infty$ (and $p^*$ tending to 1)
we see that 
\[\frac{a^{1+1/p^*}(a^2-1)}{(\sqrt{2\pi})^{1/p-1}\sigma^{1/p+1}}\rightarrow c_2(a)\,:=\,\frac{\sqrt{2 \pi}\, a^2\,(a^2-1)}{\sigma}.\]
Furthermore, we write 
\begin{eqnarray*}
&&\int_{-\infty}^\infty \exp\left(-\frac{y^2}{2\sigma^2}((a^2-2)p+1)\right)
 \abs{y}^{p+p/p^*}\rd y\\
&& =\,\int_{-\infty}^\infty \left[
   \exp\left(-\frac{y^2}{2\sigma^2}\left((a^2-2)+1-\frac{1}{p^*}\right)\right)
 \abs{y}^{1+1/p^*}\right]^p\rd y.
\end{eqnarray*}
From this we conclude that
\[\|h_{2,1}\|_{L_\infty(-1/2,1/2)}\,=\,c_2(a)\,
\max_{z\in\bbR}z^2\,\exp\left(-\frac{z^2}{2\,\sigma^2}\,(a^2-2)\right)
\,=\,c_2(a)\,{\rm e}^{-1}\,\frac{2\,\sigma^2\,}{a^2-2}
\]
for $a^2>2$. Hence 
\[C_{1,\infty}(\nu_a)\,\le\,\sqrt{2\,\pi}\sigma\,a^2\,\left(1+\frac2{{\rm e}}\,
\frac{a^2-1}{a^2-2}\right). 
\]
It is easy to check that the upper bound on $C_{1,\infty}(\nu_a)$, as a function of $a$, 
attains its minimum at 
\begin{equation}\label{star2}
  a^*\,=\sqrt{ 2 + \frac{2}{\sqrt{2+\rm e}} }\,=\,1.70902\ldots
  \quad\mbox{and then}\quad C_{1,\infty}(a^*)\,=\,\sigma\, \times\, 18.5582\ldots
  \quad\mbox{for\ }p=+\infty.
\end{equation}

Returning to \eqref{eq:normh2}, and using $p(1+1/p^*)=2p-1$,
we have for $p<+\infty$,
\begin{eqnarray*}
 \norm{h_{2,p^*}}_{L_p (-1/2,1/2)}
 &=&\frac{a^{1+1/p^*}(a^2-1)}{(\sqrt{2\pi})^{1/p-1}\sigma^{1/p+1}} 
 \left(\int_{-\infty}^\infty \exp\left(-\frac{x^2}{2\sigma^2}((a^2-2)p+1)
 \right)
 \abs{x}^{p+p/p^*}\rd x\right)^{1/p}\\
 &=& \,c_2(a)\,\left(\frac1{\sqrt{2\,\pi}\,\sigma\, a}\,
\int_{-\infty}^{\infty}|x|^{2\,p-1}\,\exp\left(-\frac{x^2}{2\,\sigma^2}\,
(p\,(a^2-2)+1)\right)\rd x\right)^{1/p}.
\end{eqnarray*}

This means that $\|h_{2,p^*}\|_{L_p(-1/2,1/2)}<+\infty$ if and only if
$p(a^2-2)+1>0$, i.e., 
\[a^2\,>\,1+\frac1{p^*}.  
\]
By the change $y=\frac{x}{\sigma}\,\sqrt{p(a^2-2)+1}$ we get that 
\[\|h_{2,p^*}\|_{L_p(-1/2,1/2)}\,=\,c_3(a)\,\left(\frac1{\sqrt{2\,\pi}}
\,\int_{-\infty}^{\infty}|y|^{2\,p-1}\,\exp\left(-\frac{y^2}{2}\right)
\rd y\right)^{1/p},
\]
where
\[c_3(a)\,=\,c_2(a)\,\frac{\sigma^{2-1/p}}{(p\,(a^2-2)+1)\,a^{1/p}}\,=\,
\frac{\sqrt{2 \pi} \, a^{1+1/p^*}\,(a^2-1) \, \sigma^{1/p^*}}{(p\,(a^2-2)+1)}.
\]
It is well known that for natural numbers $p$,
\[\frac1{\sqrt{2\,\pi}}\,\int_{-\infty}^{\infty}|y|^{2\,p-1}\,
\exp\left(-\frac{y^2}{2}\right)\rd y\,=\,\frac1{\sqrt{2\,\pi}}\,2^{p}\,(p-1)!\, .
\]
Hence, for natural numbers $p$,
\[\|h_{2,p^*}\|_{L_p(-1/2,1/2)}\,=\,c_3(a)\,\frac{2\,((p-1)!)^{1/p}}{(2\,\pi)^{1/(2p)}},
\]
and, for $a^2>1+\frac{1}{p^*}$,
\begin{eqnarray*}
C_{1,p}(\nu_a)&\le&c_1(a)+c_3(a)\,\frac{2\,((p-1)!)^{1/p}}{(2\,\pi)^{1/(2p)}}\\
&=&a^{1+1/p^*}\,(\sigma \, \sqrt{2\,\pi})^{1/p^*}+ 
\frac{\sqrt{2 \pi} \, a^{1+1/p^*}\,(a^2-1) \, \sigma^{1/p^*}}{p\,(a^2-2)+1}\,
\frac{2\,((p-1)!)^{1/p}}{(2\,\pi)^{1/(2p)}}\\
&=& a^{1+1/p^*}\,(\sigma \, \sqrt{2\,\pi})^{1/p^*}\,\left(1
+\frac{ 2\,(a^2-1)\,((p-1)!)^{1/p}}{p\,(a^2-2)+1}\right) < +\infty. 
\end{eqnarray*}

For example, if $p=p^*=2$, then
\[C_{1,2}(\nu_a)\,\le\,a^{3/2} \sqrt{\sigma \sqrt{2 \pi}}
\left(2+\frac{1 }{2 a^2 -3}\right).
\]
As a function of $a$, the upper bound on 
$C_{1,2}(\nu_a)$ attains its minimum at $a^*=3/2$ and then
\[C_{1,2}(\nu_{a^*})\,\le\,2 \sqrt{6 \sqrt{2 \pi}}\,
\sqrt{\sigma} =\sqrt{\sigma} \times 7.7562\ldots.
\]

If $p$ is not a natural number, we can estimate 
\begin{eqnarray*}
\frac1{\sqrt{2\,\pi}}\,\int_{-\infty}^{\infty}|y|^{2\,p-1}\,
\exp\left(-\frac{y^2}{2}\right)\rd y&\,\le\,&C\, +\,
\frac1{\sqrt{2\,\pi}}\,\int_{-\infty}^{\infty}|y|^{2\,\lceil p \rceil-1}\,
\exp\left(-\frac{y^2}{2}\right)\rd y\\
&\,=\,&C\, +\,
\frac1{\sqrt{2\,\pi}}\,2^{\lceil p \rceil}\,(\lceil p \rceil-1)!\,
\end{eqnarray*}
for some absolute constant $C>0$. Then, $\|h_{2,p^*}\|_{L_p (-1/2,1/2)}$ 
and $C_{1,p}(\nu_a)$ can be estimated accordingly.

\medskip
\begin{test}\label{Test2}
\rm 
Consider integration of the function $f(x)=|x|,$ similar to
Numerical Test \ref{Test1}.
Then the integral to be computed is
\[\int_{-\infty}^\infty\frac{|x|}{\sigma\sqrt{2\pi}}\,
\exp\left(-\frac{x^2}{2\sigma^2}\right)\rd x 
=\sqrt\frac 2\pi.
\]
In this case, our change of variables is defined as
$x=\nu_a(t):=a\sqrt{2}\,\mathrm{erfinv}(2t),$ where 
$a\ge 1,$ and $\mathrm{erfinv}$ is the inverse of the erf function.
This results in the integral
$$ \int_{-1/2}^{1/2}\,g_{f,\nu_a}(t)\,\mathrm d t\qquad\mbox{with}\qquad
     g_{f,\nu_a}(t)=a\;|\nu(t)|\exp\left(-\frac{\nu^2(t)}2\left(1-\frac 1{a^2}\right)\right). $$
Note that $g_{f,\nu_a}\in W_{1,\infty}$ for $a>\sqrt 2.$

In the following table, we compare the absolute integration  errors of the midpoint rule
with $n$ samples applied to $g_{f,\nu_a}$ with $a=a^*$  as in \eqref{star2}, $a=\sqrt 2,$ and $a=1$. 

\[\begin{array}{lccc}
  n  &  a=a^*=1.70902\ldots  &  a=\sqrt{2} & a=1 \\
\hline\\
10   & 6.044012E-03 & 8.241433E-03 & 2.734692E-02 \\
10^2 & 1.096395E-04 & 1.157302E-03 & 2.106636E-03 \\
10^3 & 1.200953E-06 & 6.968052E-05 & 1.770735E-04 \\
10^4 & 1.217389E-08 & 3.509887E-06 & 1.559486E-05 \\ 
10^5 & 1.219963E-10 & 1.642630E-07 & 1.409149E-06 \\
\end{array}
\]
\end{test}

\section{$\gamma$-Weighted Spaces and MDM}\label{Sect:MDM}

We show in this section how the algorithms derived via the change of
variables can be used in efficient implementation of the
{\em Multivariate Decomposition Method}  ({\em MDM} for short)
that was introduced in \cite{KuSlWaWo2011} (see also \cite{KuNuPlSlWa2017}). 
Our presentation follows \cite{Wa2013,Wa2014}, where more details can be found.

Consider the following $\gamma$-weighted space 
$\calF_\gamma=\calF_{\gamma,d,p,q}$ of functions $f:D\to\bbR$ with
mixed first order partial derivatives bounded in $L_p(D^d)$ 
(as before) and for which the following norm of $\calF_\gamma$ is finite,
\[\|f\|_{\calF_\gamma}
\,:=\,\left[\sum_\setu \gamma_\setu^{-q}\,
  \|f^{(\setu)}(\cdot_\setu;\bszero)\|_{L_p(D^\setu)}^q\right]^{1/q}
  \,<\,\infty.
\]
Here, $q\in[1,\infty]$, the summation above is with respect to all subsets
\[
\setu\,\subseteq\,\{1,2,\dots,d\},
\]
$f^{(\setu)}$ denotes
$\prod_{j\in\setu}\frac{\partial}{\partial x_j}\,f$, the vector 
$(\bsx_\setu;\bszero)$ is the $d$-dimensional vector $\bsx=(x_1,x_2,\ldots,x_d)$ with the $j$-th component $x_j$ set to zero whenever $j\notin\setu$, 
and $\gamma_\setu$ are given positive numbers quantifying
importance of the subsets $\setu$. For $\setu=\emptyset$, we set
$f^{(\emptyset)}\equiv f(0)$ and $\gamma_\emptyset=1$. 
For simplicity, we consider only so-called {\em product weights},
introduced in \cite{SlWo98}, of the form
\[
\gamma_\setu\,=\,\prod_{j\in\setu}\gamma_j, 
\]
where $\{\gamma_j\}_{j\ge 1}$ is a given non-increasing sequence of positive numbers.

\begin{remark} \rm
The results can easily be extended to functions with infinitely many
($d=+\infty$) variables. Then the summation is with respect to
all finite subsets $\setu$ of $\bbN$.
\end{remark}

It is well known that any $f\in\calF_\gamma$ has a unique
{\em anchored decomposition} of the form
\[f(\bsx)\,=\,\sum_\setu f_\setu(\bsx),
\]
where, for $\setu\not=\emptyset$, 
$f_\setu$ depends only on $x_j$ with $j\in\setu$ and is anchored at zero, i.e., 
$$\mbox{$f_\setu(\bsx)=0$ if there is $j \in \setu$ such that $x_j=0$.}$$ 
Moreover,
\[
f_\setu^{(\setu)}\,=\,f^{(\setu)}(\cdot_\setu;\bszero),\quad
\mbox{which implies that}\quad
\|f\|_{\calF_\gamma}\,=\,\left[\sum_\setu \gamma_\setu^{-q}\,
  \|f_\setu\|_{F_{\setu,p}}^q\right]^{1/q}\quad\mbox{and}\quad
  f_\setu\,\in\,F_{\setu,p},
\]
where the spaces $F_{\setu,p}$ are equivalent to $F_{d,p}$
for $d=|\setu|$ with the only difference that the functions in
$F_{\setu,p}$ depend on the variables $x_j$ with $j\in\setu$.
In particular, $F_{d,p}=F_{\{1,2,\ldots,d\},p}$. Therefore
\[
 I_{d,\rho}(f)\,=\,\sum_\setu I_{|\setu|,\rho}(f_\setu).
\]

Suppose now that
\begin{equation}\label{eq:qgamma}
 \left[ \sum_{j=1}^\infty \gamma_j^{q^*}\right]^{1/q^*}\,<\,\infty.
\end{equation}
Following \cite{PlWa2011}, one can show that for $\e>0$ there is a
set ${\rm Act}(\e)$ containing some $\setu$'s with cardinality
$|{\rm Act}(\e)|=O(\e^{-1})$ such that 
\[
\sum_{\setu\notin{\rm Act}(\e)}|I_{|\setu|,\rho}(f_\setu)|\,\le\,
\frac{\e}{2^{1/q^*}}\,\left\|\sum_{\setu\notin{\rm Act}(\e)} f_\setu
\right\|_{\calF_\gamma}\quad\mbox{and}\quad 
d(\e)\,:=\,\max_{\setu\in{\rm Act}(\e)} |\setu|\,=\,O\left(
\frac{\ln(1/\e)}{\ln(\ln(1/\e))}\right).
\]
As shown recently in \cite{GiWa2016}, the absolute constants
in the big-$O$ notations above are very small, see also Example \ref{ex:infty}. 
Hence it is enough to aproximate the integrands
$I_{|\setu|,\rho}(f_\setu)$ for $\setu\in{\rm Act}(\e)$ with the total
error bounded by
\[
\frac{\e}{2^{1/q^*}}\,\left\|\sum_{\setu\in{\rm Act(\e)}} f_\setu
\right\|_{\calF_\gamma}.
\]
This can be achieved by using cubatures $Q_{|\setu|,\rho,n_{\setu}}$
(see \eqref{dupa-2}) with appropriately chosen natural numbers
$n_\setu$ such that
\[
\left[\sum_{\setu\in{\rm Act}(\e)}\left(\gamma_\setu\,C_{|\setu|,p}(\nu)
  \,{\rm error}(Q_{|\setu|;n_\setu}\,;\,
  W_{|\setu|,p})\right)^{q^*}\right]^{1/q^*}
\le\,\frac{\e}{2^{1/q^*}},
\]
which follows from \eqref{dupa-3}. Note that 
\[
\gamma_\setu\,C_{|\setu|,p}(\nu)\,=\,\prod_{j\in\setu}
(\gamma_j\,C_{1,p}(\nu)),
\]
due to Proposition \ref{prop:univar}. Hence, as shown in, e.g.
\cite{Wa2013}, the sum of all $n_\setu$ is small,
\[
  \sum_{\setu\in{\rm Act}(\e)} n_\setu\,=\,O(\e^{-1}).
\]
This means that the corresponding MDM uses altogether $O(1/\e)$
function values to approximate $O(1/\e)$ integrals, each with
at most $d(\e)=O(\ln(1/\e)/\ln(\ln(1/\e)))$ variables.

We now illustrate this by the following special case.

\begin{example}\label{ex:infty} \rm
Let $D=\bbR_+$, $\rho(x)=\exp(-x/\lambda)/\lambda$, $\gamma_j=1/j^{\beta}$ with some positive $\beta$,
and $q=1$. Recall that then $\|I_{1,\rho}\|=(\lambda/p^*)^{1/p^*}$ and,
hence, we can take 
\[{\rm Act}(\e)\,=\,\left\{\setu\,:\, \|I_{1,\rho}\|^{|\setu|}\,\gamma_\setu
\,>\,\e\right\}\,=\,\left\{
\setu\,:\,\frac{(\lambda/p^*)^{|\setu|/p^*}}{\prod_{j\in\setu}j^\beta}\,>\,
\e\right\},
\]
for which
\[d(\e)\,=\,\max\left\{k\,:\,\frac{(\lambda/p^*)^{k/p^*}}
{(k!)^\beta}\,>\,\e\right\}.
\]
For instance for $p=p^*=2$, $\lambda=2$, and $\e=10^{-4}$ we have:

$$\begin{array}{l||l|l|l|l}
\beta & 2 & 3 & 4 & 5\\
\hline
d(\e) & 4 & 3 & 3 & 3
\end{array}$$
\end{example}

\begin{test}\label{Test3}{\rm
Let, similarly to Test \ref{Test1}, 
$D=\bbR_+$, $\rho(x)=\exp(-x)$, and define, for $d\ge 1$, 
$f_d (\bsx)=\prod_{j=1}^d x_j$ for $\bsx=(x_1,\ldots,x_d)\in\bbR_+^d$. 
The value of the integral is then 1 
and we have $\norm{f}_{\infty,d}=1$. Using the change of variables $\nu_a$ from \eqref{eqchangerho}, 
the corresponding integrand is
\[ g_{d,a}(\bst)\,=\,\prod_{j=1}^d \left(-a^2\,(\ln(1-t_{j}))\,(1-t_{j})^{a-1}\right).\]

As integration rules, we use lattice rules\footnote{We use lattice rules with generating vectors taken from the website of Frances Y.~Kuo. 
The generating vectors used in this example 
were generated for equal product weights $\gamma_j=1$, referred to as ``lattice-28001'' at
\texttt{http://web.maths.unsw.edu.au/\~{ }fkuo/lattice/index.html} .} 
(cf. \cite{SJ94}) with $n=2^k$, $k=10,11,12,\ldots,15$, points in $[0,1)^d$. 

We compare the absolute integration errors
for $a=a^*$ as in \eqref{eq:a*_p_infty}, $a=1.5$, and $a=1$, and $d=3$ and $d=4$. In view of Example \ref{ex:infty}, 
it is justified to concentrate on $d$ in this range. The results below indicate that the approach taken in this paper 
yields effective improvements over the standard change of variables.

\[
\begin{array}{lccc}
d=3 & & & \\
 \hline\
n  &    a=a^*=2.4557\ldots  &  a=1.5 & a=1  \\
\hline\\
2^{10} &  1.088922E-04 & 9.822891E-04 & 3.702216E-03\\
2^{11} &  3.972304E-05 & 3.220787E-04 & 1.725302E-02\\
2^{12} &  1.160874E-05 & 1.342448E-04 & 4.356446E-03\\
2^{13} &  3.077676E-06 & 1.812985E-04 & 7.678733E-03\\
2^{14} &  7.859542E-07 & 1.831772E-05 & 2.343056E-03\\ 
2^{15} &  2.052312E-07 & 2.369181E-06 & 3.260093E-03\\ 

\end{array}
\]

\[
\begin{array}{lccc}
d=4 & & & \\
 \hline\
n  &    a=a^*=2.4557\ldots &  a=1.5 & a=1  \\
\hline\\
2^{10} &  2.125335E-05 & 1.461441E-03 & 1.316315E-02\\
2^{11} &  1.092869E-05 & 7.043893E-04 & 3.040979E-02\\
2^{12} &  5.426712E-06 & 8.995698E-06 & 1.980920E-02\\
2^{13} &  6.603896E-06 & 2.797430E-04 & 1.999552E-02\\
2^{14} &  8.905416E-06 & 9.166504E-05 & 5.214687E-03\\ 
2^{15} &  2.602460E-06 & 6.881108E-05 & 3.043182E-04\\ 

\end{array}
\]

}
\end{test}

\section*{Acknowledgments}

P.~Kritzer, L.~Plaskota, and G.W.~Wasilkowski would like to thank the MATRIX institute in Creswick, VIC, Australia, and
its staff for supporting their stay during the program ``On the Frontiers of High-Dimensional Computation'' 
in June 2018. Furthermore, the authors thank the RICAM Special Semester Program
2018, during which parts of the paper were written.


\begin{small}
\noindent\textbf{Authors' addresses:}

\medskip
\noindent Peter Kritzer\\
Johann Radon Institute for Computational and Applied Mathematics (RICAM)\\
Austrian Academy of Sciences\\
Altenbergerstr.~69, 4040 Linz, Austria\\
E-mail: \texttt{peter.kritzer@oeaw.ac.at}

\medskip

\noindent Friedrich Pillichshammer\\
Institut f\"{u}r Finanzmathematik und Angewandte Zahlentheorie\\
Johannes Kepler Universit\"{a}t Linz\\
Altenbergerstr.~69, 4040 Linz, Austria\\
E-mail: \texttt{friedrich.pillichshammer@jku.at}

\medskip

\noindent Leszek Plaskota\\
Institute of Applied Mathematics and Mechanics\\
Faculty of Mathematics, Informatics, and Mechanics\\
University of Warsaw\\
Banacha 2, 02-097 Warsaw, Poland\\
E-mail: \texttt{leszekp@mimuw.edu.pl} 

\medskip

\noindent G. W. Wasilkowski\\
Computer Science Department, University of Kentucky\\
301 David Marksbury Building\\
329 Rose Street\\
Lexington, KY 40506, USA\\
E-mail: \texttt{greg@cs.uky.edu}
\end{small}

\end{document}